\newcommand{\N}{\ensuremath{\mathbb{N}}}
\newcommand{\R}{\ensuremath{\mathbb{R}}}
\newcommand{\tT}{{\scriptscriptstyle \operatorname{T}}}
\DeclareMathOperator*{\argmin}{arg\,min}
\DeclareMathOperator*{\argmax}{arg\,max}
\title{
Segmentation of FIB data with Brightness Variations
}
\newtheorem{remark}{Remark}
\newtheorem{proposition}[remark]{Proposition}
\newtheorem{theorem}[remark]{Theorem}
\newtheorem{corollary}[remark]{Corollary}
\title{
	Unsupervised Multi Class Segmentation 
	of 3D Images with Intensity Inhomogeneities
}
\author{Jan Henrik Fitschen\footnote{Department of Mathematics,
		University of Kaiserslautern, Germany,
		{\{fitschen,steidl\}@mathematik.uni-kl.de}}
	\and 
	Katharina Losch\footnote{Fraunhofer ITWM, Kaiserslautern, Germany}
	\and Gabriele Steidl\footnotemark[1]
}
\begin{document}
	\maketitle
\begin{abstract}
\noindent
Intensity inhomogeneities in images cause problems in gray-value based image segmentation since the varying intensity often dominates over gray-value differences of the image structures.
In this paper we propose a novel biconvex variational model
that includes the intensity inhomogeneities
to tackle this task.
We combine a total variation approach for 
multi class segmentation 
with a multiplicative model to handle the inhomogeneities.
In our model we assume that the image intensity 
is the product of a smoothly varying part and a component 
which resembles important image structures such as edges.
Therefore, we penalize in addition to the total variation of the label assignment matrix 
a quadratic difference term to cope with the smoothly varying factor.
A critical point of the resulting biconvex functional is computed by a modified
proximal alternating linearized minimization method
(PALM). We show that the assumptions for the convergence of the
algorithm are fulfilled.
Various numerical examples demonstrate the very good performance 
of our method. Particular attention is paid to the segmentation of 
3D FIB tomographical images serving as a motivation for our work.
\end{abstract}

%
\section{Introduction}
%
Intensity inhomogeneities often occur in real-world images mainly due to different spatial
lighting and deficiencies of imaging devices.
For example in MRI, imperfections in the radio-frequency coils 
or problems associated with acquisition sequences cause intensity changes.
The motivation for this paper was the task of segmenting 3D images 
stemming from focused ion beam (FIB) tomography.
While classical $X$-ray tomography does often not reach the required material resolution,
FIB tomography enables to investigate the 3D morphology of structures on a scale down to several nanometers.
The material is successively removed by a focused ion beam and after every section, the surface is displayed by scanning electron microscopy. 
Several hundred of these serial slices finally form a 3D image. 
A typical slice of a 3D FIB tomography of  aluminum with  silicon carbide (SiC) particles (larger black parts)
and copper aggregations (small white parts) is shown in Fig.~\ref{fig:bad_ex:a}.
The segmentation has to distinguish between the particles, the aggregates and the surrounding aluminum matrix.
However, due to the intensity inhomogeneities, a segmentation based on the gray-values gives a very bad result.
Fig.~\ref{fig:bad_ex:b} and \ref{fig:bad_ex:c} show segmentation results for a 3D FIB data set using a supervised gray-value based segmentation method without considering the illumination.
Therefore, we have to choose one cluster center for each of the three classes, i.e., in total three gray-values that are close to the classes aluminum, SiC and copper, respectively.
Unfortunately, the cluster centers cannot be chosen appropriate for all parts of the image so that either too many or
not enough particles are detected.
Therefore the segmentation of such images has to take the intensity inhomogeneities into account.
\begin{figure*}
	\begin{subfigure}[t]{0.42\textwidth}\centering
		\includegraphics[width=0.7\textwidth]{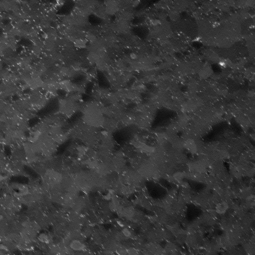}
		\caption{Exemplary slice of a 3D data set with intensity inhomogeneities}
		\label{fig:bad_ex:a}
	\end{subfigure}\\
	\begin{subfigure}[t]{0.48\textwidth}\centering
		\includegraphics[width=0.99\textwidth]{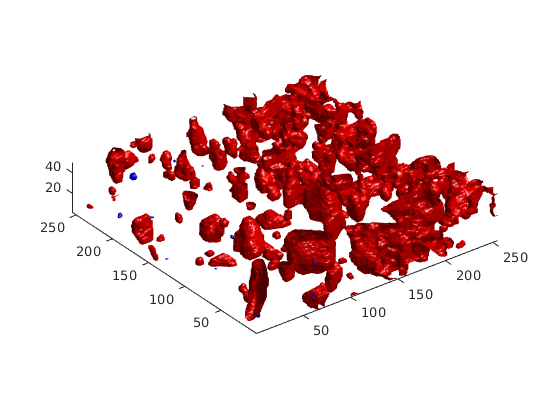}
		\caption{Segmentation without considering intensity inhomogeneities with cluster centers chosen such that the left part is segmented correctly.}	
		\label{fig:bad_ex:b}
	\end{subfigure}	\hspace{0.2cm}
	\begin{subfigure}[t]{0.48\textwidth}\centering
		\includegraphics[width=0.99\textwidth]{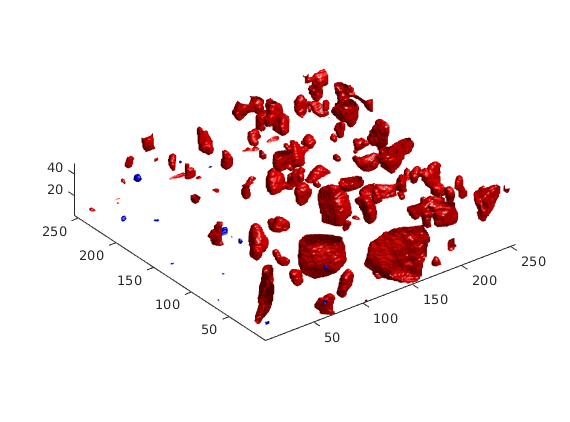}
		\caption{Segmentation without considering intensity inhomogeneities with cluster centers chosen such that the right part is segmented correctly.}
		\label{fig:bad_ex:c}
	\end{subfigure}
	\caption{One slice of FIB data with varying illumination 
	and two 3D segmentation results using a model not considering the illumination.}\label{fig:bad_ex}
\end{figure*}

There are several techniques for illumination corrections 
in the literature. These methods could be used
in a preprocessing step before applying a segmentation algorithm of choice.
In particular in MRI, intensity corrections were proposed by
simple homomorphic filtering \cite{BMR98,JAMA96} and 
polynomial, resp., spline surface fitting approaches
\cite{DZM93,MBP95, TMGW93,GBDTA96}.
Many spatial illumination correction methods for natural images take hypotheses 
about the Human Visual System (HVS) into account.  
In particular, the perceptual work about the Retinex model \cite{LM71} has found wide acceptance.
It states that the HVS does not perceive an absolute lightness but rather a relative, local one.
This phenomenon is called lateral inhibition.
For example, Fig.~\ref{fig:chess} shows the experiment 
of the checkerboard shadow illusion of Adelson \cite{adelson1995checkershadow}.
Although the squares A and B have the same gray-value, the perceived intensities are different.
In the Retinex model, the light intensity $F$ perceived by the observer or camera is considered to be 
the product of the reflectance of the objects $R$ in the scene 
and the amount of source illumination  $L$ falling on the objects, see also \cite[p.~51]{GW08},
\begin{equation} \label{retinex}
F(x) = R(x) L(x),
\end{equation}
where $R(x) \in (0,1)$ and $L(x) \in (0,+\infty)$.
While we assume that the reflectance inherits the structures of the objects, e.g.~edges,
we consider the illumination as spatially smooth, in particular it should not have sharp edges, which represent the image structures. 
Taking the logarithm in \eqref{retinex} we obtain
\begin{equation} \label{retinex_log}
f(x) = r(x) + l(x),
\end{equation}
where $f = \log F$, $r = \log R$ and $l = \log L$.
Retinex based variational or PDE based approaches for illumination corrections 
can be found for example in \cite{LZ15,MMOC11,MPS10,NW11}.

In MRI the observed intensity is often modeled similarly as in \eqref{retinex}
by the product
of a structural part $R$ and a so called gain factor $L$.
In this paper we also follow the  multiplicative intensity model.

In contrast to a two step procedure we consider
the simultaneous segmentation and intensity inhomogeneity estimation.
This avoids the computational burden of two separate procedures
and has moreover the advantage of being able to use intermediate information
from the segmentation while performing the update.
Besides statistical methods as the computationally extensive EM approach in \cite{WGKJ96},
variational based algorithms were proposed in \cite{AYMFM02,ABCAK16,LHDGMG08,LHDGMG11,MS07,PP99,ZZLZ16}.
We consider the later approaches in more detail in the next section.

Variational segmentation models have shown a very good performance 
and flexibility in many applications.
Level set methods and convex segmentation models which penalize the (nonlocal) discrete total variation (TV)
of a relaxed label assignment matrix have been successfully applied 
\cite{BYT09,BC08,CV01,LBS09,LKYBS09,PCCB09,SS12a,ZGFN08}.

In this paper, we combine the TV based segmentation method with the 
multiplicative intensity model.
This results in a biconvex non smooth functional, which has to be optimized
with respect to the label assignment matrix, the cluster centers 
and the smoothly varying intensity factor.
Then we obtain both a segmented image and an estimation of the intensity inhomogeneities.
We compute a critical point of the corresponding functional by applying a slight modification
of the proximal alternating linearized minimization method
(PALM) by Bolte et al.~\cite{BST14}. 
\begin{figure*}
	\centering
\begin{subfigure}[t]{0.32\textwidth}\centering
	\includegraphics[width=0.9\linewidth]{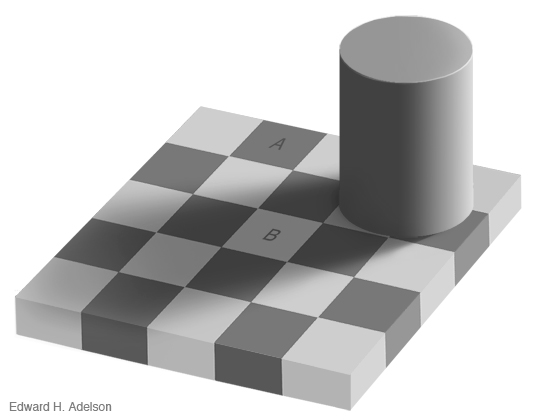}
	\caption{Original image}
\end{subfigure}
\begin{subfigure}[t]{0.32\textwidth}\centering
	\includegraphics[width=0.9\linewidth]{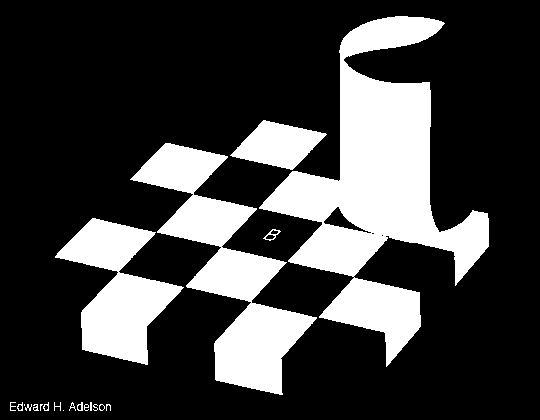}
	\caption{Segmentation result}
\end{subfigure}
\begin{subfigure}[t]{0.32\textwidth}\centering
	\includegraphics[width=0.9\linewidth]{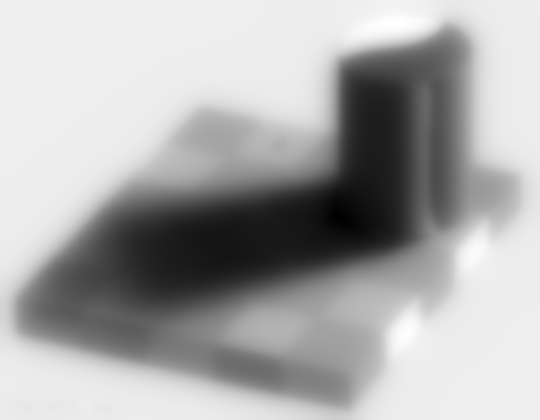}
	\caption{Computed illumination}
\end{subfigure}
	\caption{Result for the ``checkerboard'' image.}
	\label{fig:chess}
\end{figure*}	

The paper is organized as follows:
In Section \ref{sec:model} we review variational segmentation models
which compensate for the varying intensities while segmenting the image.
Then we introduce our model. 
The modified PALM is explained in Section \ref{sec:palm}.
Section \ref{sec:numerics} contains numerical examples.
The paper finishes with conclusions in Section \ref{sec:conclusions}.
%
\section{New Model} \label{sec:model}
Let ${\cal G} \coloneqq \{1,\ldots, n_1\} \times \ldots \times \{1,\ldots, n_d\}$
be a $d$-dimensional image grid.
Here we will deal with $d=2$ and $d=3$.
Let $n= n_1 \ldots n_d$ be the number of image pixels.
We consider images $F\colon {\cal G} \rightarrow \R$ which we want to segment into $K$ classes.
We introduce a so-called label assignment matrix  
$$
u \coloneqq (u_k(j))_{ j \in {\cal G}, k \in \{ 1,\ldots,K \} }.
$$
Ideally we would like to have for a fixed pixel $j \in {\cal G}$ that $u_k(j) = 1$ if it belongs to class $k$
and $u_k(j) = 0$ otherwise.
However, in the subsequent variational approach this would lead to optimization tasks which are
NP hard to solve. 
Therefore it is common to relax the assumptions on the label assignment matrix
and  require only that $u_k(j) \in [0,1]$
is the probability that pixel $j$ belongs to class $k$.
Since every pixel should be assigned to one of the classes, we enforce for every $j \in {\cal G}$ that
$$
\sum_{k=1}^K u_k(j) = 1.
$$
In other words $(u_k(j))_{k=1}^K$ is an element of the probability simplex
\begin{equation}\label{prob_simplex}
 \triangle_K \coloneqq \{ v =(v_k)_{k=1}^K \in [0,1]^K: \sum_{k=1}^K v_k = 1\}.
\end{equation}
From the label assignment matrix the segmentation can be obtained by
assigning, e.g., the label $$\hat k \coloneqq \argmax_{k=1,\ldots,K} u_k(j)$$ to $j \in {\cal G}$.

A variational model is composed of a data term, which incorporates information about
the given image $F$ and penalizing/regularizing terms, which contain prior information on the image.
Let $C \coloneqq (C_1,\ldots,C_K)^\tT$ denote the unknown vector of the centers of the $K$ gray-value clusters.
For segmentation problems a typical data term is given by
\begin{equation} \label{data_start}
 \sum_{k=1}^K \sum_{j \in {\cal G}} (u_k(j))^p |F(j) - C_k|^2,
\end{equation}
where $p \ge 1$.
For $p=1$ this data term appears in the $K$ (or $C$) means approach,
while $p>1$ is related to the fuzzy $K$-means method \cite{Bezdek81a,Bezdek81,HHMSS12,MA96}. 
Roughly speaking, the data term indicates the following:
If $F(j)$  is close to the center $C_k$, then the corresponding summand in \eqref{data_start} 
remains small for a larger $u_k(j)$.
Thus the probability of $F(j)$ being in the class $k$ is high.
If $F(j)$  is  far away from the center $C_k$, then the corresponding term in \eqref{data_start} 
becomes only small if the probability 
$u_k(j)$ that $F(j)$ belongs to class $k$ is small.
Clearly other distance measures between $F(j)$ and $C_k$ than the squared absolute value can be chosen.
Moreover, the approach can be generalized to color images or more general image feature vectors $F$
than gray-values. In this paper we restrict our attention to gray-value images.
The above data term does not take intensity inhomogeneities into account.
However, as shown in Fig.~\ref{fig:bad_ex} it is often not possible to find appropriate
class centers if the illumination varies.
A remedy consists in incorporating the smoothly varying intensity part $L$
into the model and consider
\begin{equation}\label{seg_mult_1}
{\cal E}_{data} (u,C,L) \coloneqq  \sum_{k=1}^K \sum_{j \in {\cal G}} (u_k(j))^p |F(j) - L(j) C_k|^2,
\end{equation}
$p \ge 1$,
or its logarithmic counterpart
\begin{equation}\label{seg_mult_2}
E_{data} (u,c,l) \coloneqq  \sum_{k=1}^K \sum_{j \in {\cal G}} (u_k(j))^p |f(j) - l(j) - c_k|^2,
\end{equation}
where $f = \log F$, $c = \log C$ and $l = \log L$.
From \eqref{seg_mult_2} we get the illumination by $L = \exp(l)$ and the class centers by $C = \exp(c)$.
The functional \eqref{seg_mult_1} is triconvex, i.e., fixing two of the variables $u,C,L$
the functional is convex in the third variable.
In contrast, the functional \eqref{seg_mult_2} is only biconvex in $u$ and $v \coloneqq (c,l)$.
This means if we fix $u$, then the functional is convex in $v$ and conversely.

Within the penalizers pixel differences are often used since they indicate smooth or higher frequent image parts
as edges.
By $\nabla F$ we denote the $d$-dimensional discrete gradient of $F$, 
where we use forward differences in each
direction together with mirror boundary conditions.
For the concrete matrix structure of the gradient operator we refer to \cite{SS12a}.
Similarly, $\nabla^2 F$ is the discrete Hessian of $F$.
Throughout this paper $\|A\|_2$ denotes the square root of the sum of the 
squared entries of a multidimensional array $A$.

Let us briefly review existing variational models for image segmentation
which take intensity inhomogeneities into account.
In \cite{PP99} the following model with data term \eqref{seg_mult_1} with $p=2$ was suggested:
\begin{align}\label{new_model_cb}
	&
	\argmin_{u,C,L} \big\{ {\cal E}_{data} (u,C,L) + 
	+ \gamma_1 \Big\| \nabla L \Big\|^{2}_{2}
	+ \gamma_2 \Big\| \nabla^2 L 
	\Big\|^2_2 \big\}
	\\ 
	&\qquad \qquad \mathrm{subject \; to}  \quad \left(u_k(j) \right)_{k=1}^K \in \triangle_K, 
		\;	j \in {\cal G},
\end{align}
where $\gamma_i >0$, $i=1,2$ are regularization parameters.

This model only penalizes non smooth intensity inhomogeneities $L$ via two terms,
but does not take the edges of the label assignment matrix $u$ into account.
Therefore it is not robust to noise. 
Moreover it has the drawback that there does not exist a minimizer.
This can be simply seen by setting 
$L \coloneqq F/r$ and $C_k \coloneqq r$ with some constant $r>0$ so that the functional becomes 
$
\frac{\gamma_1}{r} \Big\| \nabla L \Big\|^{2}_{2}
	+ \frac{\gamma_2}{r^2} \Big\| \nabla^2 L \|_2^{2}
$.
For $r \rightarrow +\infty$ we obtain a minimizing sequence of the functional which does not converge.

Another segmentation model for MRI with data term \eqref{seg_mult_2}
was proposed by Ahmed et al.~\cite{AYMFM02}:
\begin{align}\label{model_ahmed}
&\argmin_{u,C,L} \Big\{  E_{data}(u,c,l) \\
&\qquad \qquad \qquad + 
\lambda \sum_{k=1}^K \sum_{j \in {\cal G}} (u_k(j))^p  \sum_{i \in {\cal N}_j} (f(i) - l(i) - c_k)^2\Big\}
\\
& \qquad \qquad \mathrm{subject \; to}  \quad \left(u_k(j) \right)_{k=1}^K \in \triangle_K, \;	j \in {\cal G},\\
& \qquad \qquad \qquad \qquad \quad \;  0< \sum_{j \in {\cal G}} u_k(j) < n , \; k=1,\ldots,K.
\end{align}
Here $\lambda >0$ is a regularization parameter. 
The second term takes the illumination in the neighborhood ${\cal N}_j$ of the $j$-th pixel 
into account in order to respect the lateral inhibition. 
However, as in the previous model no penalizer for the label assignment matrix is used,
which makes the model sensitive to noise.
A similar model which penalizes the simplex constraint 
was suggested in \cite{MS07}. 
Unfortunately, the model formulation in \cite{MS07} is mathematically not sound 
and the optimization procedure does not fit to the model.

Li et al.~\cite{LHDGMG08,LHDGMG11} considered a variational level set model,
which reads in the continuous setting as
\begin{align} \label{model_li}
\argmin_{\phi,C,L} \Big\{  \int \sum_{k=1}^K &e_k(x) M_k(\phi(x)) \, dx\\
&+ \lambda \int |\nabla H(\phi)| \, dx 
+ \gamma \int (|\nabla \phi |-1)^2 \, dx \Big\}
\end{align}
with 
\begin{align}
	e_k(x) \coloneqq \int K_\sigma(y-x) |F(x)- L(y) C_k)|^2 \, dy.
\end{align}
Here $K_\sigma$ is a truncated Gaussian kernel with standard deviation $\sigma$, $H$ a smoothed Heaviside function and $M_k$  is a membership function.
The model was applied for two-dimensional medical images.
Note that the functional is not convex in $\phi$ for fixed $C$ and $L$.
A more general level set approach is considered in \cite{ZZLZ16}.
Furthermore, a slightly different approach was proposed also in \cite{ABCAK16}.

In this paper we consider the model
\begin{align}\label{new_model_log}
	&\argmin_{u,c,l} \Big\{E_{data}(u,c,l) 
	+  \sum_{k=1}^K \lambda_k \Big\| | \nabla u_k |	\Big\|_1 
	+ \gamma  \Big\| \nabla l		\Big\|_2^2 \Big\}
	\\ 
	&\qquad	\qquad \mathrm{subject \; to}  \quad \left(u_k(j) \right)_{k=1}^K \in \triangle_K, 
		\;	j \in {\cal G},
\end{align}
where $p=1$ in the data term. 
Here $\lambda >0$ and $\gamma >0$ are regularization parameters and 
$$u_k \coloneqq \left( u_k(j) \right)_{j \in {\cal G}}$$
is the $k$-th label assignment matrix.
We have not found this model in the literature.
Using the indicator function of sets
$$
\iota_S(x) \coloneqq 
\left\{
\begin{array}{ll}
                       0&{\rm if } \; x \in S,\\
                       +\infty&{\rm otherwise} ,
                                \end{array}
                                \right.
$$
we can express the constraint ``subject to $\left(u_k(j) \right)_{k=1}^K \in \triangle_K,\;	j \in {\cal G}$'' by adding $\iota_{\triangle_K^n}(u)$ to the functional.
Hence, the functional in \eqref{new_model_log} can be written as
\begin{align}\label{new_model_func}
E(u,c,l) =   E_{data}(u,c,l) 
	&+  \sum_{k=1}^K \lambda_k \Big\| | \nabla u_k |	\Big\|_1 \\
	&+ \gamma  \Big\| \nabla l		\Big\|_2^2
	+ \iota_{\triangle_K^n}(u).
\end{align}
As in \eqref{new_model_cb} we penalize a non smooth illumination $l$, but just by one quadratic first order difference term.
The matrix $u_k$ is penalized by its discrete total variation (TV) \cite{ROF92},
where $|\nabla u_k|$ denotes the length of the discrete gradient of $u_k$ and the $\ell_1$ norm
is taken over the grid points $j \in {\cal G}$. This term encourages smooth edges
of the segmented objects and makes the model more robust to noise.
We allow the choice of different parameters $\lambda_k$ to better segment objects of different size.
The functional \eqref{new_model_log} is biconvex in $u$ and $(c,l)$ and possesses a minimizer as the following proposition states.
\begin{proposition}
	The functional $E(u,c,l)$ defined by
	\eqref{new_model_func} has a minimizer.
\end{proposition}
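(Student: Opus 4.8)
The plan is to use the direct method of the calculus of variations. Since $\mathcal G$ is finite, the functional $E$ in \eqref{new_model_func} lives on the finite-dimensional space $\triangle_K^n \times \R^K \times \R^n$, the indicator forcing $E = +\infty$ off $\triangle_K^n$. Every term of $E$ is nonnegative, so $\inf E \ge 0$, and $E$ is finite at, e.g., $u_k(j) \equiv 1/K$, $c = 0$, $l = 0$; hence $\inf E \in [0,+\infty)$. First I would pick a minimizing sequence $(u^{(m)}, c^{(m)}, l^{(m)})$ with $E(u^{(m)}, c^{(m)}, l^{(m)}) \to \inf E$ and bounded value, say $\le M$. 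Because $\triangle_K^n$ is compact, the $u$-component already lies in a fixed compact set, so after passing to a subsequence $u^{(m)} \to u^*$ in $\triangle_K^n$; the whole difficulty is to show that $c^{(m)}$ and $l^{(m)}$ may be taken bounded.

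For the illumination $l$, I would first exploit a gauge invariance: replacing $(u, c, l)$ by $(u, c - t\mathbf 1, l + t\mathbf 1)$ for $t \in \R$ changes neither the data term (which depends on $l(j)+c_k$ only) nor $\|\nabla l\|_2^2$ (since $\nabla$ annihilates constants) nor the remaining terms. Choosing $t = -\tfrac1n\sum_j l^{(m)}(j)$ I may assume the mean of $l^{(m)}$ vanishes without changing $E$. On the subspace of mean-zero arrays the discrete gradient $\nabla$ is injective (its kernel consists exactly of the constant arrays), hence bounded below, which yields a discrete Poincaré inequality $\|l^{(m)}\|_2 \le c_0 \|\nabla l^{(m)}\|_2$. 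Together with $\gamma\|\nabla l^{(m)}\|_2^2 \le M$ this bounds $l^{(m)}$, so along a further subsequence $l^{(m)} \to l^*$.

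The remaining and main obstacle is the centers $c^{(m)}$, which are unconstrained and for which $E$ fails to be coercive once a class becomes empty. Writing $W_k^{(m)} := \sum_j u_k^{(m)}(j) \to W_k^* := \sum_j u_k^*(j)$ and $a_j^{(m)} := f(j) - l^{(m)}(j)$ (uniformly bounded, $|a_j^{(m)}| \le A$, since $f$ is fixed and $l^{(m)}$ is now bounded), I would split into cases. If $W_k^* > 0$, then bounding $\sum_j u_k^{(m)}(j)\,(a_j^{(m)} - c_k^{(m)})^2 \ge W_k^{(m)}\big[(|c_k^{(m)}| - A)^2 - A^2\big] \le M$ and using $W_k^{(m)} \ge W_k^*/2$ for large $m$ gives $(|c_k^{(m)}| - A)^2 \le 2M/W_k^* + A^2$, so $c_k^{(m)}$ stays bounded. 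If $W_k^* = 0$, then $u_k^* \equiv 0$ and $c_k^{(m)}$ may diverge, but it is essentially free: resetting $c_k^{(m)} := 0$ replaces the nonnegative $k$-th data contribution by $\sum_j u_k^{(m)}(j)\,(a_j^{(m)})^2 \le A^2 W_k^{(m)} \to 0$, hence raises $E$ by at most $A^2 W_k^{(m)} \to 0$, so the modified sequence is still minimizing while now $c_k^{(m)} = 0$. After one more subsequence, $c^{(m)} \to c^*$.

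It remains to invoke lower semicontinuity. The data term is a polynomial in $(u, c, l)$ and hence continuous; $\sum_k \lambda_k \|\,|\nabla u_k|\,\|_1$ and $\gamma\|\nabla l\|_2^2$ are continuous as norms composed with linear maps; and $\iota_{\triangle_K^n}$ is lower semicontinuous because $\triangle_K$ is closed, with $u^* \in \triangle_K^n$ as a limit of simplex points. Therefore $E(u^*, c^*, l^*) \le \liminf_m E(u^{(m)}, c^{(m)}, l^{(m)}) = \inf E$, so $(u^*, c^*, l^*)$ is a minimizer. I expect the only genuinely delicate points to be the non-coercivity caused by the shift invariance and by vanishing classes, both of which are dealt with above by normalizing the mean of $l$ and by resetting $c_k$ for empty classes.
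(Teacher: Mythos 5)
Your proof is correct and takes essentially the same route as the paper: your gauge normalization of $l$ is exactly the paper's decomposition of $l$ along $\ker\nabla=\{a\mathbf{1}_n : a\in\R\}$ and its orthogonal complement, and your treatment of asymptotically empty classes by resetting their centers to zero at asymptotically vanishing cost mirrors the paper's construction of a bounded infimal sequence, after which both arguments conclude by lower semicontinuity in finite dimensions. The only differences are simplifications on your side: you extract a convergent subsequence of $u$ up front (avoiding the paper's iterated ``without loss of generality'' reduction), you bound the surviving centers through the aggregate masses $W_k^{(m)}$ rather than selecting pixels with $u_k^r(j_k^r)\ge\varepsilon>0$, and you leave $u$ untouched, whereas the paper additionally transfers the mass of vanishing classes onto a fixed non-vanishing class $\hat k$ --- a step your argument shows to be unnecessary.
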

\begin{proof}
	It suffices to show that there exists a bounded infimal sequence
	since $E(u,c,l)$ is  lower semi-continuous and bounded from below.
	
	Let $((u,c,l)^r)_{r \in \mathbb{N}}$, $(u,c,l)^r \in \triangle_K^n \times \R^K \times \R^n$, be an infimal sequence, i.e.,
	\begin{align}
	E(u^r,c^r,l^r) \rightarrow \inf_{(u,c,l)} E(u,c,l) \quad \mathrm{as} \ r \rightarrow \infty.
	\end{align}    
	
	Without loss of generality, we can assume the following: If there exists a subsequence $(u^{r_i}_k)_{i\in \N}$ for some $k \in 1,\ldots,K$ with 
	\begin{align}\label{eq:segm_0}
	\sum_{j\in\mathcal{G}} u^{r_i}_k(j) \rightarrow 0 \ \mathrm{as} \ i \rightarrow \infty,
	\end{align}
	then the whole sequence converges to zero, in other words
	\begin{align}\label{eq:segm_0b}
	\sum_{j\in\mathcal{G}} u^{r}_k(j) \rightarrow 0 \ \mathrm{as} \ i \rightarrow \infty.
	\end{align}
	If this was not the case, we could restrict the following analysis to the sequence $((u,c,l)^{r_i})_{i \in \mathbb{N}}$ and possibly repeat the procedure for the next $k$ fulfilling \eqref{eq:segm_0}.
	Note that it is not possible that \eqref{eq:segm_0b} holds for all $k \in \{1,\ldots,K\}$ since $u^r \in \triangle_K^n$.
	
	Since the sequence is assumed to be an infimal sequence, 
	we immediately obtain by the constraint on $u^r$ that $(u^r)_{r \in \mathbb{N}}$ is bounded.
	
	Next we consider $l$.
	Due to mirror extension at the boundary, we have that 
	\begin{align}
	\ker \nabla = \{a \mathbf{1}_n : a \in \R \},
	\end{align}
	where $\mathbf{1}_n$ is the vector consisting of $n$ entries $1$,
	i.e., the kernel consists of constant images.
	We decompose $l$ into two parts
	\begin{align}
	l^r = l_{\ker}^r \mathbf{1}_n + l_{\ker^\perp}^r,
	\end{align}
	where $l_{\ker^\perp}^r \in \ker(\nabla)^\perp$ and $l_{\ker}^r \mathbf{1}_n \in \ker(\nabla)$ with $l_{\ker}^r \in \mathbb{R}$.
	Using again that the sequence is an infimal  one, we see due to the term $\| \nabla l\|_2^2$ 
	that $(l^r_{\ker^\perp})_{r \in \mathbb{N}}$ is bounded.
	
	Now define the (possibly empty) set 
	\begin{align}
	\mathcal{K} \coloneqq \left\{k : \sum_{j\in\mathcal{G}} u^r_k(j) \rightarrow 0 \ \mathrm{as} \ r \rightarrow \infty\right\} \subset \{1,\ldots,K\}
	\end{align}
	and fix $\hat k \not \in \mathcal{K}$.
	We show that a bounded infimal sequence is given by $((\tilde u,\tilde c,\tilde l)^r)_{r \in \mathbb{N}}$, where
	\begin{align}
	\tilde u_k^r &\coloneqq \begin{cases}
	\mathbf{0} 	& \mathrm{if } \ k \in\mathcal{K}, \\
	u_k^r + \sum_{i \in\mathcal{K}} u_i^r & \mathrm{if } \ k=\hat k,\\
	u_k^r		& \mathrm{otherwise},
	\end{cases}\\
	\tilde c_k^r &\coloneqq \begin{cases}
	0 	& \mathrm{if } \ k \in\mathcal{K}, \\
	c_k^r+l^r_{\ker} & \mathrm{otherwise},
	\end{cases}\\	
	\tilde l^r &\coloneqq l^r-l^r_{\ker}\mathbf{1}_n = l_{\ker^\perp}^r.	
	\end{align}
	We immediately obtain that $(\tilde u^r)_{r \in \mathbb{N}}$ is bounded as $(u^r)_{r \in \mathbb{N}}$ is bounded.
	Further, $(\tilde l^r)_{r \in \mathbb{N}} = (l^r_{\ker^\perp})_{r \in \mathbb{N}}$ is bounded.
	Next, we show that $(\tilde c^r)_{r \in \mathbb{N}}$ is bounded.
	From the data term \eqref{seg_mult_2}, we conclude that 
	\begin{align}
	(\sum_{j\in\mathcal{G}} u_k^r(j) |f(j) - l^r(j) - c^r_k|^2)_{r \in \mathbb{N}}
	\end{align} 
	is bounded for all $k=1,\ldots,K$.
	Let $k \not \in \mathcal{K}$. By our assumption that \eqref{eq:segm_0} implies \eqref{eq:segm_0b}, we know that there also does not exist a subsequence $(r_i)_{i \in \N}$ with $\sum_{j\in\mathcal{G}} u^{r_i}_k(j) \rightarrow 0 \ \mathrm{as} \ i \rightarrow \infty$.
	Thus, for $k \not \in \mathcal{K}$,
	there must exist a series of pixels $(j_k^r)_{r \in \mathbb{N}}$ such that $u_k^r(j_k^r) \ge \varepsilon > 0$ for $r$ sufficiently large. 
	Hence,
	\begin{align}
	(|f(j_k^r) - l^r(j_k^r) - c^r_k|)_{r \in \mathbb{N}} = (|f(j_k^r) - l_{\ker^\perp}^r(j_k^r) - \tilde c^r_k|)_{r \in \mathbb{N}}
	\end{align} 
	is bounded.
	Consequently, $(\tilde c_k^r)_{r \in \mathbb{N}}$ is bounded for all $k \not \in \mathcal{K}$.
	Together with $\tilde c_k^r = 0$ for $k\in \mathcal{K}$, we obtain that $(\tilde c^r)_{r \in \mathbb{N}}$ is bounded.
	Thus, the whole sequence $((\tilde u,\tilde c,\tilde l)^r)_{r \in \mathbb{N}}$ is bounded.
	
	It remains to show that it is an infimal one.		
	Note that adding $l^r_{\ker}$ to $c_k^r$ and subtracting it from $l^r(j)$ does not change the objective value.
	
	By construction, the constraint $\tilde u^r \in \triangle_K^n$ is still fulfilled.
	Thus, the only change in the objective value arises for the summands belonging to $k \in \mathcal{K}$ and $\hat k$.
	But since 
	\begin{align}
	\tilde u^r_{\hat k} - u^r_{\hat k} \rightarrow \mathbf{0} \ \mathrm{as} \ r \rightarrow \infty
	\end{align}
	and
	\begin{align}
	\sum_{j\in\mathcal{G}} u^r_k(j) \rightarrow 0 \ \mathrm{as} \ r \rightarrow \infty, \qquad k \in \mathcal{K},
	\end{align}
	the objective value does not change in the limit such that the sequence $((\tilde u,\tilde c,\tilde l)^r)_{r \in \mathbb{N}}$ is still an infimal sequence.
	This finishes the proof.
\end{proof}

\begin{remark} \label{rem:alternative}
Alternatively to \eqref{new_model_log} we could deal with the non logarithmic model
 \begin{align}\label{new_model}
	&\argmin_{u,C,L}  \Big\{ {\cal E}_{data}(u,C,L)
		+ \sum_{k=1}^K \lambda_k \Big\| | \nabla u_k |	\Big\|_1 
		+ \gamma \Big\| \nabla L 		\Big\|_2^2 \Big\}
		\\ 
	&\qquad \qquad \mathrm{subject \; to} \quad \left(u_k(j) \right)_{k=1}^K \in \triangle_K, 
		\;	j \in {\cal G}.
\end{align}
We prefer to work with the biconvex functional \eqref{new_model_log}, 
although we have obtained similar numerical results by minimizing the above triconvex functional.
\end{remark}

\section{Minimization Algorithm} \label{sec:palm}
%
In this section we deal with the minimization of our functional \eqref{new_model_func}.
First we mention that fixing $(c,l)$ we get
\begin{align}
E_{(c,l)} (u) \coloneqq \sum_{k=1}^K &\sum_{j \in {\cal G}} u_k(j)  |f(j) - l(j) - c_k|^2 \\
		&+ \sum_{k=1}^K \lambda_k \Big\| | \nabla u_k |	\Big\|_1
		+ \iota_{\triangle_K^n}(u),
\end{align}
which is convex, but not strictly convex.
Fixing $u$ we obtain the convex functional
$$
E_{u} (c,l) \coloneqq \sum_{k=1}^K \sum_{j \in {\cal G}} u_k(j)  |f(j) - l(j) - c_k|^2
+ \gamma  \Big\| \nabla l		\Big\|_2^2 .
$$
This functional becomes strictly convex if
we assume that we have only nonempty classes, i.e.,
$$
\sum_{j \in {\cal G}} u_k(j) \ge \varepsilon > 0
$$
and enforce the mean value of $l$ to be zero.
In other words,
\begin{align}\label{eq:star_strictlyconv}
\tilde E_{u} (c,l) \coloneqq &\sum_{k=1}^K \sum_{j \in {\cal G}} u_k(j)  |f(j) - l(j) - c_k|^2\\
&+ \gamma  \Big\| \nabla l		\Big\|_2^2 + \iota_{\{0\}}(\langle \mathbf{1}, l\rangle)
\end{align}
is strictly convex in $c$ and $l$ separately due to the quadratic term.
It is furthermore jointly convex in $(c,l)$ by the term $\iota_{\{0\}}(\langle \mathbf{1}, l\rangle)$.

For an algorithm which computes in an alternating way
\begin{align*}
u^{(r)} &\in \argmin_{u} E_{(c^{(r)} ,l^{(r)} )}(u) \\
(c^{(r+1)} ,l^{(r+1)} ) &\in  \argmin_{(c,l)} E_{u^{(r)}} (c,l )
\end{align*}
one can  prove similarly as in \cite{BHST87} that the resulting sequence 
$\{ u^{(r)},c^{(r)} ,l^{(r)} \}_r$ has a subsequence which
converges to a partial minimizer of \eqref{new_model_func}.
For an alternating convex search algorithm for general biconvex problems we refer to \cite{GPK07}.

In this paper we want to apply an algorithm which ensures the convergence
of the whole sequence $\{ u^{(r)},c^{(r)} ,l^{(r)} \}_r$.
To this end we need the definition of the proximal mapping.
For a proper, lower semi-continuous, convex function $h\colon \R^N  \rightarrow (-\infty,+\infty]$ and $\lambda >0$,
the proximal operator $\mathrm{prox}_{\lambda h}\colon \R^N  \rightarrow R^N$ is defined by
$$
\mathrm{prox}_{\lambda h}(x) \coloneqq \argmin_{y \in \R^N} \frac{1}{2\lambda} \|x-y\|_2^2 + h(y).
$$
Indeed, the minimizer of the right-hand side exists and is uniquely determined, see, e.g.,~\cite{Ro70}.

Recently, Bolte, Sabach and Teboulle \cite{BST14} considered 
problems of the form
\begin{align}\label{teboulle}
	\argmin_{x=(x_1,\ldots,x_m)} \Big\{ \sum_{i=1}^m h_i(x_i) + H(x) \Big\},
\end{align}
where $h_i\colon \R^{N_i} \rightarrow (-\infty,+\infty]$, $i=1,\ldots,m$,
are proper lower semi-continuous functions and
$H\colon \R^{N_1} \times \ldots \times \R^{N_m} \rightarrow \R$ is continuously differentiable.
Further, $\sum_{i=1}^m h_i(x_i) + H(x)$ needs to be a Kurdyka-\L ojasiewicz (KL) function.
For a discussion of KL functions, we refer to \cite{ABRS10,BDLM10,BST14}.
Here, we only want to emphasize that semi-algebraic functions are KL and our setting fits to the examples for semi-algebraic functions given in \cite[Sect.~5]{BST14}.
The authors suggested Algorithm \ref{PALM_general} for such problems.

\begin{algorithm*}
Input: functions $h_i\colon \R^{N_i} \rightarrow (-\infty,+\infty]$, $i=1,\ldots,m$,\\
\hspace{2.5cm} $H\colon \R^{N_1} \times \ldots \times \R^{N_m} \rightarrow \R$\\
\hspace{1.0cm} step-sizes $\tau_i^r$, $i=1,\ldots,m$, $r=0,1,\ldots$\\[1ex]
Initialization:  $x_i^{(0)} \in \R^{N_i}$, $i=1,\ldots,m$
\\[2ex]
For $r=0,1,...$ do
\begin{align}
	x^{(r+1)}_i &\in \mathrm{prox}_{\tau^r_i h_i} \left( x_i^{(r)} - \frac{1}{\tau_{i,r} } \nabla_{x_i} 
	H(x^{(r+1)}_1,\ldots,x^{(r+1)}_{i-1},x^{(r)}_i,\ldots,x^{(r)}_m) \right), \\
	& \hspace{8.6cm} i=1,\ldots,m.
\end{align}	
\caption{PALM for $m$ blocks}\label{PALM_general}
\end{algorithm*}

Under certain assumptions it was shown that the sequence $\{x^{(r)}\}_{r}$ converges to a critical point of the functional in \eqref{teboulle}.
%
\begin{theorem} \label{thm:teb}
Let $h_i\colon \R^{N_i} \rightarrow (-\infty,+\infty]$, $i=1,\ldots,m$
be proper lower semi-continuous functions
and 
$H\colon \R^{N_1} \times \ldots \times \R^{N_m} \rightarrow \R$  a continuously differentiable function such that $\nabla H$ is Lipschitz continuous on bounded sets.
Let $h_i$, $i=1,\ldots,m$ and $\sum_{i=1}^m h_i+H$ be bounded from below.
Further, let $\sum_{i=1}^m h_i(x_i) + H(x)$ be a KL function.
Assume that the sequence of iterates $\{x^{(r)}\}_{r}$ produced by PALM is bounded.
Further suppose that for $i=1,\ldots,m$,
	the partial gradients $\nabla_{x_i} H(x)$ are globally Lipschitz for 
	fixed $$(x_1,\ldots,x_{i-1},x_{i+1},\ldots,x_m)$$ with Lipschitz constant 
	$$L_i(x_1,\ldots,x_{i-1},x_{i+1},\ldots,x_m),$$
	where  
	\begin{equation}
	L_i \left( (x_1^{(r)},\ldots,x_{i-1}^{(r)},x_{i+1}^{(r)},\ldots,x_m^{(r)}) \right) \le \lambda_i^+
	\end{equation}
	for all $r \in \mathbb N$ and some 	$\lambda_i^+ \in \R$. 
Then, for 
$$
\tau_{i,r} \ge L_i \left( (x_1^{(r)},\ldots,x_{i-1}^{(r)},x_{i+1}^{(r)},\ldots,x_m^{(r)}) \right), \quad i=1, \ldots,m,
$$ 
the sequence $\{x^{(r)}\}_{r}$ converges to a critical point of the functional in \eqref{teboulle}.
\end{theorem}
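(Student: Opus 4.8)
The plan is to recognize Theorem~\ref{thm:teb} as an instance of the abstract convergence scheme for descent methods governed by a Kurdyka--\L ojasiewicz function, and to verify its hypotheses for the PALM iteration of Algorithm~\ref{PALM_general}. Write $\Psi(x) \coloneqq \sum_{i=1}^m h_i(x_i) + H(x)$ for the objective in \eqref{teboulle}. The abstract scheme (as in \cite{BST14}) guarantees that the whole sequence converges to a critical point once three conditions hold: (H1) a \emph{sufficient decrease} $\Psi(x^{(r+1)}) + \tfrac{\rho_1}{2}\|x^{(r+1)}-x^{(r)}\|^2 \le \Psi(x^{(r)})$ for some $\rho_1>0$; (H2) a \emph{relative-error bound}, i.e.\ the existence of $w^{(r+1)} \in \partial \Psi(x^{(r+1)})$ with $\|w^{(r+1)}\| \le \rho_2\|x^{(r+1)}-x^{(r)}\|$; and (H3) a \emph{continuity} condition, that along some convergent subsequence the values $\Psi(x^{(r_q)})$ tend to the value at the limit. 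First I would reduce the theorem to (H1)--(H3) and then establish each.

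For (H1) I would argue block by block. Since $\nabla_{x_i}H$ is globally Lipschitz with constant $L_i$ for fixed remaining blocks, the descent lemma gives $H(\ldots,x_i^{(r+1)},\ldots) \le H(\ldots,x_i^{(r)},\ldots) + \inprod{\nabla_{x_i}H,\, x_i^{(r+1)}-x_i^{(r)}} + \tfrac{L_i}{2}\|x_i^{(r+1)}-x_i^{(r)}\|^2$. On the other hand, the proximal step is equivalent to minimizing the surrogate $y \mapsto \inprod{\nabla_{x_i}H,\, y-x_i^{(r)}} + \tfrac{\tau_{i,r}}{2}\|y-x_i^{(r)}\|^2 + h_i(y)$, so comparing its minimizer $x_i^{(r+1)}$ with $y = x_i^{(r)}$ yields $\inprod{\nabla_{x_i}H,\, x_i^{(r+1)}-x_i^{(r)}} + \tfrac{\tau_{i,r}}{2}\|x_i^{(r+1)}-x_i^{(r)}\|^2 + h_i(x_i^{(r+1)}) \le h_i(x_i^{(r)})$. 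Adding the two inequalities cancels the linear term and leaves a per-block decrease of $\tfrac{\tau_{i,r}-L_i}{2}\|x_i^{(r+1)}-x_i^{(r)}\|^2$; summing over $i=1,\ldots,m$ over one sweep gives (H1). The subtle point is that $\tau_{i,r}\ge L_i$ alone only yields monotonicity, so for a definite $\rho_1>0$ one chooses the step sizes strictly above the Lipschitz constants, e.g.\ $\tau_{i,r} = \gamma_i L_i$ with $\gamma_i>1$, and it is precisely the uniform bound $L_i(\cdots)\le \lambda_i^+$ along the iterates that keeps $\rho_1$ independent of $r$. Telescoping (H1) and using that $\Psi$ is bounded from below then gives $\sum_r \|x^{(r+1)}-x^{(r)}\|^2 < \infty$, hence $\|x^{(r+1)}-x^{(r)}\|\to 0$.

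For (H2) I would read off the optimality condition of the surrogate, namely $-\nabla_{x_i}H(\ldots,x_i^{(r)},\ldots) - \tau_{i,r}(x_i^{(r+1)}-x_i^{(r)}) \in \partial h_i(x_i^{(r+1)})$. Adding and subtracting $\nabla_{x_i}H(x^{(r+1)})$ produces the element $w_i^{(r+1)} \coloneqq \nabla_{x_i}H(x^{(r+1)}) - \nabla_{x_i}H(\ldots,x_i^{(r)},\ldots) - \tau_{i,r}(x_i^{(r+1)}-x_i^{(r)})$ of the $i$-th block of $\partial\Psi(x^{(r+1)})$. Since the iterates are bounded by hypothesis and $\nabla H$ is Lipschitz on bounded sets, the two evaluation points of $\nabla_{x_i}H$ differ only in already-updated blocks, each controlled by $\|x^{(r+1)}-x^{(r)}\|$, and $\tau_{i,r}\le \gamma_i\lambda_i^+$ is uniformly bounded, so $\|w^{(r+1)}\| \le \rho_2\|x^{(r+1)}-x^{(r)}\|$, which is (H2). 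Condition (H3) then follows from boundedness: any cluster point $\bar x$ is attained along a subsequence, and lower semicontinuity of the $h_i$ combined with the prox inequality forces $\Psi(x^{(r_q)})\to\Psi(\bar x)$. With (H1)--(H3) in hand and $\Psi$ assumed to be a KL function, the abstract theorem yields that $\{x^{(r)}\}_r$ has finite length, hence is Cauchy and converges, and that its limit is a critical point of \eqref{teboulle}. I expect the delicate step to be (H2): it is there that the distinction between the global per-block constants $L_i$ (used for the decrease) and the Lipschitz-on-bounded-sets property of the full gradient $\nabla H$ (used to bound the cross term) must be reconciled, and where boundedness of the iterates together with the uniform bounds $\lambda_i^+$ is essential to keep $\rho_2$ independent of $r$.
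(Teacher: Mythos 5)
The paper itself gives no proof of this theorem: it is imported verbatim from Bolte, Sabach and Teboulle \cite{BST14} (their PALM convergence theorem), so the only meaningful comparison is with the proof in that reference. Your reconstruction --- sufficient decrease via the descent lemma combined with the prox inequality, the relative-error subgradient bound via the prox optimality condition together with Lipschitz continuity of $\nabla H$ on bounded sets and boundedness of the iterates, value continuity along subsequences, and then the abstract Kurdyka--\L ojasiewicz convergence mechanism yielding finite length --- is precisely the argument of \cite{BST14}, so your approach coincides with the source. The one point worth stressing is the step-size subtlety you flag yourself: with $\tau_{i,r}=L_i$ the per-block decrease term vanishes and the argument collapses to mere monotonicity, so the proof (and the theorem in \cite{BST14}, which takes $\tau_{i,r}=\gamma_i L_i$ with $\gamma_i>1$) genuinely requires step sizes strictly and uniformly above the Lipschitz constants; the ``$\ge$'' in the paper's transcription of the theorem is slightly too weak, and what your argument actually establishes is that corrected statement.
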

%

For minimizing \eqref{new_model_func} we apply PALM with $m=3$ blocks and the following setting
\begin{align}\label{eq:PALMsetting_add}
h_1(u) &\coloneqq \sum_{k=1}^K \lambda_k \big\| |\nabla u_k|\big\|_1 + \iota_{\triangle_K^n}(u),\\
h_2(c) &\coloneqq 0,\\
h_3(l) &\coloneqq \iota_{\{0\}}(\langle \mathbf{1}, l\rangle),\\
H(u,c,l) &\coloneqq \sum_{j,k} u_k(j)(f(j) - l(j) - c_k)^2 + \gamma \|\nabla l\|_2^2.
\end{align}
More precisely,
\begin{align}
	H(u,c,l) + h_1(u) + &h_2(c) + h_3(l) \\
	&= E(u,c,l) + \iota_{\{0\}}(\langle \mathbf{1}_N, l\rangle),
\end{align}
i.e., we have modified the functional \eqref{new_model_func} by the additional function $h_3$, which
enforces the logarithmic illumination to have mean value zero, see \eqref{eq:star_strictlyconv}.
Note that adding a constant $C\in \R$ to  $l(j)$ for all $j \in {\cal G}$,
and subtracting this constant from the $c_k$, $k=1,\ldots,K$
does not change the value of the functional \eqref{new_model_func}.
Therefore we have an ambiguity in the solution consisting of a constant function.
By introducing $h_3$ we enforce that $l$ has mean zero, i.e.,
we fix the constant to $C \coloneqq -\langle \mathbf{1}, l \rangle /n$ such that the ambiguity is removed.

Of course it would also be possible to use PALM with 
$m=2$ blocks by handling $(c,l)$ together.
However, this leads to joint estimates for the Lipschitz constant in the partial gradient of $H$
with respect to $(c,l)$ which is more restrictive than the Lipschitz constants from the
separate gradients with respect to $c$ and $l$, see also proof of Corollary \ref{cor:palm}.

In PALM we need the following partial gradients of $H$:
\begin{align}\label{eq:PALM_grads}
	\nabla_u H(u,c,l) &= \left( (f(j) - l(j) - c_k)^2 \right)_{j,k},
	\\
	\nabla_c H(u,c,l) &= 2 \left( \sum_j u_k(j) (c_k + l(j) - f(j)) \right)_{k},
	\\
	\nabla_l H(u,c,l) &= 2 \left(\sum_k u_k(j) (l(j) + c_k - f(j))\right)_j \\
	& \qquad + 2 \gamma \nabla^* \nabla l.
\end{align}
Based on these gradients PALM reads for our setting as follows:

\begin{algorithm*}
	Input: step-sizes $\tau_i^r \in \R_{>0}$, $i=1,2,3$, $r=0,1,\ldots$\\[1ex]
Initialization:  $u^{(0)} \in \R^{nK}$, $c^{(0)} \in \R^K$, $l^{(0)} \in \R^n$
\\[2ex]
For $r=0,1,...$ do
\begin{align}
	u^{(r+1)} &= \mathrm{prox}_{\tau_{1,r} h_1} \left( u^{(r)} - \frac{1}{\tau_{1,r}} \nabla_{u} 	H(u^{(r)},c^{(r)},l^{(r)}) \right),
	\\	
	c^{(r+1)} &= c^{(r)} - \frac{1}{\tau_{2,r}} \nabla_{c} H(u^{(r+1)},c^{(r)},l^{(r)}),
	\\	
	l^{(r+1)} &= \argmin_l \iota_{\{0\}}(\langle \mathbf{1}, l \rangle) + \frac12 \| l^{(r)} 
	- \frac{1}{\tau_{3,r}} \nabla_{l} 	H(u^{(r+1)},c^{(r+1)},l^{(r)}) - l\|_2^2.
\end{align}	
\caption{PALM for \eqref{eq:PALMsetting_add}.}\label{alg:PALM_special}
\end{algorithm*}

The first proximum $\mathrm{prox}_{\tau_{1,r} h_1}$ is not given analytically,
but can be computed by several methods. 
Here we use the primal dual algorithm proposed in \cite{CP11}.
The second step is just a gradient descent step.
The last proximum is a projection 
of $a \coloneqq l^{(r)} - \frac{1}{\tau_{3,r}} \nabla_{l} 	H(u^{(r+1)},c^{(r+1)},l^{(r)})$
onto the hyperplane
 $\{l: \langle \mathbf{1} , l \rangle = 0\}$
and can be computed
by subtracting from $a$ its mean value.

\begin{corollary} \label{cor:palm}
 	For the functions $h_i$, $i=1,2,3$ and $H$ defined in \eqref{eq:PALMsetting_add} and
	\begin{align}
		\tau_{1,r} > 0,\quad
		\tau_{2,r} \ge 2 n,\quad
		\tau_{3,r} \ge 2 + 8 d\gamma,
		\qquad r=0,1,\ldots,
	\end{align}
	Algorithm \ref{alg:PALM_special} converges to a critical point of \eqref{new_model_func}.
\end{corollary}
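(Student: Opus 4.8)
The plan is to verify that the setting \eqref{eq:PALMsetting_add} satisfies every hypothesis of Theorem~\ref{thm:teb} and to match the three partial Lipschitz constants $L_i$ with the stated step-size bounds. The structural hypotheses are quickly checked: $h_1$ is the sum of the total variation seminorms $\lambda_k\big\||\nabla u_k|\big\|_1$ and the indicator of the closed convex set $\triangle_K^n$, $h_2\equiv 0$, and $h_3$ is the indicator of a hyperplane, so all $h_i$ are proper, lower semi-continuous and bounded from below by $0$; $H$ is a polynomial of degree two in $(u,c,l)$ plus the quadratic $\gamma\|\nabla l\|_2^2$, hence continuously differentiable with a polynomial, thus locally Lipschitz, gradient that is Lipschitz on bounded sets, and $H\ge 0$ as a sum of squares. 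Since the total variation, the indicators of the polyhedral sets $\triangle_K^n$ and $\{\langle\mathbf{1},l\rangle=0\}$, and the polynomial $H$ are all semi-algebraic, their sum is a KL function, as already noted before Algorithm~\ref{PALM_general}.

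Next I would compute the three partial Lipschitz constants. Because $\nabla_u H(u,c,l)=\big((f(j)-l(j)-c_k)^2\big)_{j,k}$ does not depend on $u$, its Lipschitz constant in $u$ is $0$, so any $\tau_{1,r}>0$ is admissible, strict positivity being needed only so that the proximal step is defined. For $c$, the identity $\nabla_c H(u,c,l)-\nabla_c H(u,c',l)=2\big((\sum_j u_k(j))(c_k-c_k')\big)_k$ shows that the Lipschitz constant equals $2\max_k\sum_j u_k(j)$, which is bounded by $2n$ uniformly in $(u,l)$ since $u_k(j)\in[0,1]$. For $l$, using $\sum_k u_k(j)=1$ the $l$-dependent part of $\nabla_l H$ reduces to $2\,l+2\gamma\nabla^*\nabla l$, so its Lipschitz constant is the operator norm of $2I+2\gamma\nabla^*\nabla$, bounded by $2+2\gamma\|\nabla^*\nabla\|\le 2+8d\gamma$ because the discrete Laplacian satisfies $\|\nabla^*\nabla\|\le 4d$. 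These bounds are uniform in the remaining variables, so the constants $\lambda_i^+$ required by Theorem~\ref{thm:teb} exist and equal $2n$ and $2+8d\gamma$, exactly the prescribed $\tau_{2,r}$ and $\tau_{3,r}$.

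The genuinely delicate hypothesis is boundedness of the iterates, which I expect to be the main obstacle. Here I would use that, with step sizes dominating the partial Lipschitz constants, PALM enjoys the sufficient-decrease property of \cite{BST14}, so the values $E(u^{(r)},c^{(r)},l^{(r)})+\iota_{\{0\}}(\langle\mathbf{1},l^{(r)}\rangle)$ are nonincreasing and the iterates stay in a fixed sublevel set. In this set $u^{(r)}\in\triangle_K^n$ is bounded by construction; the term $h_3$ forces $\langle\mathbf{1},l^{(r)}\rangle=0$ while $\gamma\|\nabla l^{(r)}\|_2^2$ stays bounded, and on the mean-zero hyperplane $\nabla$ is injective (its kernel being the constants), so $l^{(r)}$ is bounded; finally the data term bounds $c_k^{(r)}$ for every class that does not vanish, exactly as in the existence proof of the preceding Proposition, whereas for a vanishing class the $k$-th component of $\nabla_c H$ equals $2\sum_j u_k(j)(\cdots)\to 0$, so $c_k^{(r)}$ is not driven to infinity. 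This is the step requiring the most care, since it is where the gauge-fixing role of $h_3$ and the nonempty-class considerations behind the strict convexity of $\tilde E_u$ in \eqref{eq:star_strictlyconv} must be combined.

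Having verified all hypotheses, Theorem~\ref{thm:teb} yields convergence of $\{(u^{(r)},c^{(r)},l^{(r)})\}_r$ to a critical point of $H+h_1+h_2+h_3=E+\iota_{\{0\}}(\langle\mathbf{1},l\rangle)$. To finish, I would observe that adding $h_3$ only removes the constant ambiguity $c\mapsto(c_k-C)_k$, $l\mapsto l+C\mathbf{1}_n$ discussed after \eqref{eq:PALMsetting_add}, along which $E$ is constant; hence a critical point of the modified functional is a critical point of the original $E$ in \eqref{new_model_func}, which is the assertion.
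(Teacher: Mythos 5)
Your proposal follows the paper's strategy step for step: verify the hypotheses of Theorem~\ref{thm:teb} for the splitting \eqref{eq:PALMsetting_add}, compute $L_1=0$, $L_2\le 2n$, $L_3\le 2+8d\gamma$, and obtain boundedness of the iterates from the PALM decrease property. The structural and KL checks agree with the paper; your derivation of $L_3$ using $\sum_k u_k(j)=1$, so that the $l$-part of $\nabla_l H$ is exactly $\left(2I+2\gamma\nabla^*\nabla\right)l$, is a slightly tidier variant of the paper's bound $2\max_{k,j}|u_k(j)|+2\gamma\|\nabla^*\nabla\|$. Note, however, that the paper actually \emph{proves} $\|\nabla^*\nabla\|\le 4d$ (eigenvalues of $D_{n_i}^{\tT}D_{n_i}$ together with the tensor-product structure of the $d$-dimensional gradient), whereas you only quote it; that computation should be included or cited. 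Your closing observation that $h_3$ merely fixes the constant gauge, so that a critical point of $E+h_3$ gives a critical point of $E$, is a point the paper leaves implicit and is worth making explicit.

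The genuine gap is in the boundedness of $\{c^{(r)}\}_r$, precisely the step you flagged as delicate. Your argument for a vanishing class --- ``the $k$-th component of $\nabla_c H$ equals $2\sum_j u_k(j)(\cdots)\to 0$, so $c_k^{(r)}$ is not driven to infinity'' --- is not a valid inference, for two reasons: the gradient component equals $2s_k^{(r)}\bigl(c_k^{(r)}+O(1)\bigr)$ with $s_k^{(r)}\coloneqq\sum_j u_k^{(r)}(j)$, so it contains the very quantity you are trying to bound and need not tend to zero if $c_k^{(r)}$ grows; and even increments that do tend to zero may sum to infinity. Moreover, along the iteration the classes need not split into ``vanishing'' and ``uniformly non-vanishing'', so the dichotomy borrowed from the existence proof of the Proposition does not transfer to a fixed sequence of iterates. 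For comparison, the paper disposes of this point in one line, claiming that after adding $\mathbf{1}^\tT l=0$ the functional becomes coercive and invoking the decrease property \cite[Lemma 3.3]{BST14}; that claim is itself delicate in exactly the $c$-directions of empty classes, where the functional is constant. A clean repair uses the explicit $c$-update together with the step-size bound $\tau_{2,r}\ge 2n$: whenever $s_k^{(r+1)}>0$,
\begin{equation}
c_k^{(r+1)}=\Bigl(1-\tfrac{2s_k^{(r+1)}}{\tau_{2,r}}\Bigr)c_k^{(r)}+\tfrac{2s_k^{(r+1)}}{\tau_{2,r}}\,\bar v_k^{(r)},
\qquad
\bar v_k^{(r)}\coloneqq\frac{\sum_j u_k^{(r+1)}(j)\bigl(f(j)-l^{(r)}(j)\bigr)}{s_k^{(r+1)}},
\end{equation}
and $c_k^{(r+1)}=c_k^{(r)}$ if $s_k^{(r+1)}=0$. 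Since $2s_k^{(r+1)}/\tau_{2,r}\in[0,1]$, the new iterate is a convex combination of $c_k^{(r)}$ and the weighted mean $\bar v_k^{(r)}$, which satisfies $|\bar v_k^{(r)}|\le\|f-l^{(r)}\|_\infty$; as you have already shown that $\{l^{(r)}\}_r$ is bounded, induction yields $|c_k^{(r)}|\le\max\bigl\{|c_k^{(0)}|,\;\sup_r\|f-l^{(r)}\|_\infty\bigr\}$ for all $r$. With this replacement your proof is complete.
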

\begin{proof}
We have to check that the assumptions of Theorem \ref{thm:teb} are fulfilled.

It is easy to check that $H(u,c,l)+h_1(u)+h_2(c)+h_3(l)$ is a semi-algebraic function and therefore a KL function, see the examples in \cite[Sect.~5]{BST14}.
The functions $h_1, h_2, h_3$ are lower semi-continuous and bounded from below.
Since $H$ is twice continuously differentiable, its gradient is Lipschitz on bounded sets
and by $u \in \triangle_K^n$  the function is also bounded from below.		

Let us consider the partial gradients of $H$ given by \eqref{eq:PALM_grads}.
Since $\nabla_u H(u,c,l)$ does not depend on $u$, we get immediately 
$L_1(c, l) = 0$.
For  the gradient with respect to $c$ it follows with $u \in \triangle_K^n$ that
	\begin{align}
		\|\nabla_c H(u,c_1,l) - \nabla_c H(&u,c_2,l) \|_2  \\
		&= \| \big(2\sum_j u_k(j) (c_1 - c_2)\big)_{k} \|_2
		\\
		& \le 2 n \|c_1 - c_2\|_2.
	\end{align}
Finally,  $\nabla_l H(u,c,l)$ is Lipschitz since
		\begin{align}
		\| \nabla_l H(u,c,l_1) - \nabla_l H(&u,c,l_2)\|_2 \\
		 =& \| \big(2\sum_k u_k(j) 
			(l_1(j) -l_2(j))\big)_j \\
		 &+ 2 \gamma \nabla^*\nabla (l_1 - l_2)\|_2\\
			\le& C(u) \|l_1 - l_2\|_2,
	\end{align}
	where 
	$$C(u) \coloneqq 2 \max_{k,j}{\{|u_k(j)|\}} + 2 \gamma \|\nabla^* \nabla\|.$$
	The $d$-dimensional Laplacian $\nabla^* \nabla$ has a spectral norm smaller than $4d$. This can be seen as follows.
	In 1D one simply has the forward difference matrix with mirror boundary conditions 
	\begin{align}
		\nabla = D_{n_1} \coloneqq \begin{pmatrix} -1 & 1 & \\ & \ddots & \ddots \\ & & -1 & 1 \\ & & & 0 \end{pmatrix} \in \R^{n_1,n_1}.
	\end{align}
	The eigenvalues of $\nabla^*\nabla$ are $4 \sin^2 ( \frac{\pi j}{2n})$, $j=0,\ldots,n_1-1$, see, e.g., \cite{SM14}. 
	Hence, $\|\nabla^* \nabla\| < 4$.
	In $d$ dimensions, after reshaping the $d$-dimensional image into a vector, 
	the gradient becomes
	$$
	\nabla = \begin{pmatrix} D_{1} \\ \vdots  \\  D_{d} \end{pmatrix}, \quad D_{i} \coloneqq I_{b_i} \otimes D_{n_i} \otimes I_{a_i} 
	$$
	where $a_i \coloneqq \Pi_{j=1}^{i-1} n_j$, $b_i \coloneqq \Pi_{j=i+1}^d n_j$ and $\otimes$ denotes the tensor product of matrices.
	Then
	$$
	\nabla^*\nabla = \sum_{i=1}^d I_{b_i} \otimes D_{n_i}^\tT D_{n_i} \otimes  I_{a_i} .
	$$
	Since the eigenvalues of $A \otimes B$ are given by the products of the eigenvalues of $A$ and $B$, one obtains 
	$\| \nabla^*\nabla\| \le \sum_{i=1}^d \| D_{n_i}^\tT D_{n_i}  \| \le 4d$.
	Since $u^{(r)}\in \triangle_K^n$,
	we see that $L_3(u^{(r)}, c^{(r)}) \le 2+ 8d \gamma$.
	
	Finally, the sequence of iterates produced by the algorithm is bounded by the following arguments.
	By adding the constraint to $l$, i.e., $\mathbf{1}^\tT l = 0$, the resulting functional becomes coercive.
	Together with the decrease property for the PALM iterates \cite[Lemma 3.3]{BST14} one immediately gets the boundedness.
\end{proof}
%

\section{Numerical Examples} \label{sec:numerics}
In this section we demonstrate the performance of our algorithm.
The algorithm was implemented in MATLAB with the following general parameter setting:
\begin{itemize}
 \item Lipschitz constants in PALM:
According to the Lipschitz constants we can choose any $\tau_1^r >0$.
In practice $\tau_1^r$ needs to be chosen rather small to achieve an acceptable convergence speed, here we set $\tau_{1,r} \coloneqq 10^{-6}$.
Further we set $\tau_{2,r}\coloneqq n$.
Although this choice does not fulfill the restrictions given by the Lipschitz constants, we observed numerical convergence. 
In particular compared to a smaller stepsize that fulfills the Lipschitz condition, we got the same results in a faster way.
Finally, we set $\tau_{3,r}\coloneqq2+8d\gamma$ according to the Lipschitz condition.
\item Initialization:	
The illumination $l$ is initialized as a very smooth version of the input image obtained by
convolving the input image with a Gaussian kernel of large standard deviation $\sigma$ displayed in the caption of the images.
The codebook $c$ is initialized with a rough initial guess, and $u$ with constant entries $\frac{1}{K}$.
We observed that the initialization of $u$ and $c$ is not important. For reasonable initial values the results were almost equal.
\item Iteration number: 
In all our experiments we performed $2000$ (outer) iterations of PALM. 
In each outer iteration, we applied $50$ (inner) iterations 
of the primal dual method for computing the proximal operator in the first step of PALM.
If we would apply the inner PDHG iterations on their own, usually more than $50$ iterations would be necessary.
Here, we observed that $50$ inner iterations are sufficient since the initialization is already close to the solution after some outer iterations.
\item Regularization parameters: 
If not stated otherwise, the other parameters, i.e., $\lambda_k$, $k=1,\ldots,K$ and $\gamma$ 
were chosen according to the best visual impression and are stated in the captions of the corresponding figures.
\end{itemize}
\noindent
We compare our approach with the following three other methods:
\begin{itemize}
	\item[M1)]
		Model \eqref{model_li} proposed in \cite{LHDGMG11} for 2D images,
		where we used the program package of the authors available at \url{http://www.engr.uconn.edu/~cmli/code/}.
	\item[M2)]
		We apply the pure segmentation model
		\begin{align*}
			&\argmin \Big\{ \sum_{k=1}^K \sum_{j \in \mathcal{G}} u_k(j) |f(j)  - c_k|^2
			+  \lambda \sum_{k=1}^K \| \nabla u_k \|_{2,1} 
			\Big\}
			\\ 
			&\qquad	\qquad \mathrm{subject \; to}  \quad u(j) \in \triangle_K, 
			\;	j \in \mathcal{G},
		\end{align*}
		proposed, e.g., in \cite{SS12a}. This model does not take care of illumination changes. We use this method for comparison to show that it is necessary to include the illumination in the model.
	\item[M3)]
	We apply a two step procedure.
		In the first step we use the initialization described above to estimate the illumination
		and correct the image by dividing it by the illumination.
		In the second step we apply model M2) to the corrected image.
\end{itemize}
We start with an artificial 2D image as a ground truth experiment.
Next we apply our algorithm to 2D medical images, in particular CT images, 
since this kind of images was used to test the algorithm M1) in \cite{LHDGMG11}.
Finally, we consider 3D FIB tomography data which was the motivation for this work.
Since some of the images have pixels with value zero, 
we add a small constant to avoid problems when taking the logarithm of such images.

\paragraph{Artificial 2D Images}
We start with the artificial 2D image with varying illumination in Fig.~\ref{fig:artificial_a}.
The figure shows the noise-free case.
A three class segmentation by M2) 
leads to completely wrong results.
Our model is able to segment this image exactly and extract the proper illumination $l$.
Similar results can be obtained by the method M1) \cite{LHDGMG11}.

\begin{figure}
	\centering
	\includegraphics[width=0.24\textwidth]{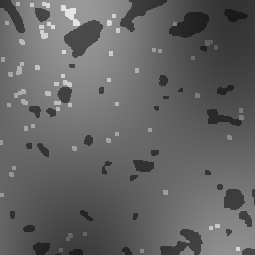}
	\includegraphics[width=0.24\textwidth]{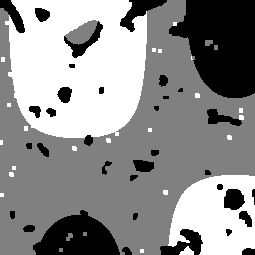}
	\includegraphics[width=0.24\textwidth]{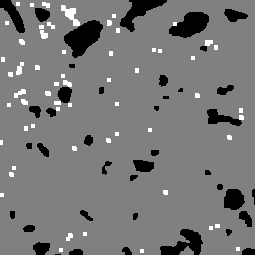}
	\includegraphics[width=0.24\textwidth]{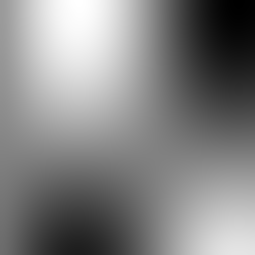}
	\caption{From left to right: original image, segmented image by M2) (without illumination correction), 
		segmented image with our method ($\lambda_1=\lambda_2=0.2$, $\gamma=100$, $\sigma=30$) and the computed illumination by our model. 
	}
	\label{fig:artificial_a}
\end{figure}

Next, we add Gaussian noise of standard deviation $s$ 
and compare the segmentation of our model and M1).
For our model we optimized the TV parameter $\lambda = \lambda_1 = \lambda_2$ for each noise-level by a grid search according to the smallest number of wrongly assigned pixels,
and took $\gamma=100$  as in the noise-free case.
In the model \eqref{model_li} from \cite{LHDGMG11} we set $\sigma=7$ which also provided the best result in the noise-free case.
As in our model, the parameter $\lambda$ was optimized by a grid search for each noise-level. Additionally, we optimized the parameter $\gamma$ by a grid search.
Fig.~\ref{fig:artificial_plot}  shows the percentage of wrongly assigned pixels depending on $s$ for our model and M1).
Up to the noise level $s = 0.004$ the proposed method clearly outperforms M1).
For $s = 0.005$ however, one class vanishes due to the noise so that the number of wrongly assigned pixels increases significantly.
To avoid this the third center $c_3$, which belongs to the vanishing class, can be fixed.
Then the proposed method outperforms M1) also for the higher noise levels.
Fig.~\ref{fig:artificial_noise} depicts the results for the noise level $s = 0.005$.
\begin{figure}
	\centering
	\includegraphics[width=0.24\textwidth]{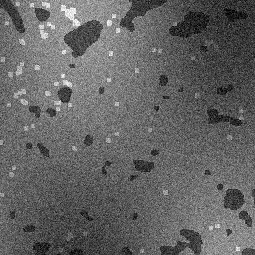}
	\includegraphics[width=0.24\textwidth]{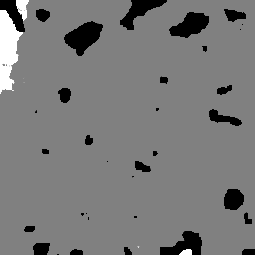}
	\includegraphics[width=0.24\textwidth]{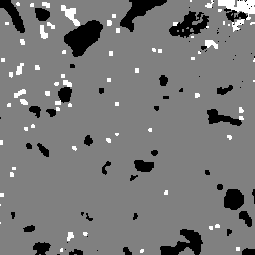}
	\includegraphics[width=0.24\textwidth]{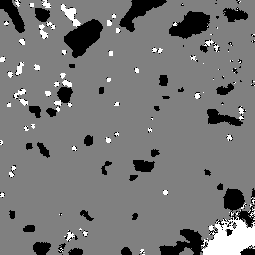}
	\caption{From left to right: noisy image ($s=0.005$), segmented images by our method ($\lambda=0.8$), our method with fixed $c_3$ ($\lambda=0.5$) and M1).}
	\label{fig:artificial_noise}
\end{figure}

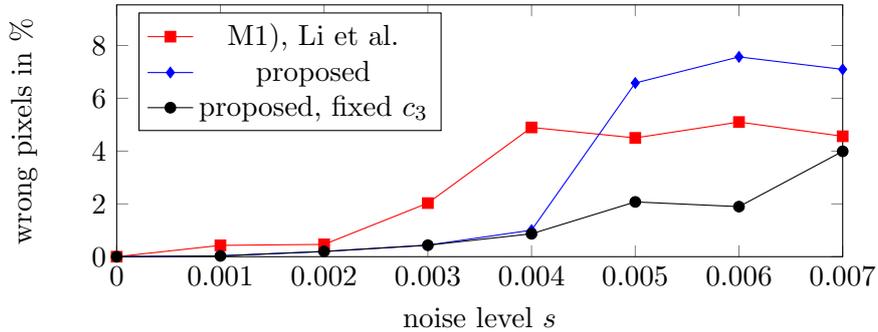
\begin{figure}
	\centering
			\begin{tikzpicture}
			\begin{axis}[
			width=.88\linewidth, height=140pt,
			xmin=0, xmax=7,
			xtick={0,1,2,3,4,5,6,7},
			xticklabels={$0$,$0.001$,$0.002$,$0.003$,$0.004$,$0.005$,$0.006$,$0.007$},
			xlabel={noise level $s$},
			ymin=0, ymax=6201,
			ytick={0,1300,2600,3901,5201},
			yticklabels={$0$,$2$,$4$,$6$,$8$},
			ylabel={wrong pixels in $\%$},
			legend pos=north west
			]
			
			\addplot[mark=square*,color=red] plot coordinates{ 
				(0,0)    		(1,279)    (2,303)    (3,1320)  
				(4,3183)    (5,2925)    (6,3316)    (7,2966)
				};					
			\addlegendentry{M1), Li et al.};	               
			
			\addplot[mark=diamond*,color=blue] plot coordinates{ 
				(0,0)    		(1,21)    (2,127)    (3,285)  
				(4,653)    (5,4278)    (6,4920)    (7,4613)
			};						
			\addlegendentry{proposed};	               
			
			\addplot[mark=*] plot coordinates{
				(0,0)    		(1,20)    (2,130)    (3,283)  
				(4,564)    (5,1349)    (6,1233)    (7,2595)
			};								                  				
			\addlegendentry{proposed, fixed $c_3$};					
			\end{axis}
			\end{tikzpicture}
	\caption{Percentage of wrongly assigned pixels depending on the noise level using the model M1), the proposed method and the proposed method with fixed third entry of the codebook.}
	\label{fig:artificial_plot}
\end{figure}

\paragraph{Medical Images}
Next we present results for different medical images.
The images shown in Fig.~\ref{fig:medical} were taken from \cite{LHDGMG08,LKGD07}.

The first row shows results for an X-ray image of bones.
The result of our model is compared to the result of M1)
 with the parameters proposed in the paper \cite{LHDGMG11}, i.e., $\sigma=4, \gamma=1$ and $\lambda= 0.001\cdot 255^2$. 
The white part corresponds to positive values of the resulting function of the level set method M1).
Furthermore, we depict the result of the two-step method M3) 
to show that correcting the illumination only in the preprocessing step 
before the segmentation is not sufficient. 
The methods M1) and M3) perform equally well, where each one shows different slight artifacts.
The proposed model gives the best segmentation result, in particular the left bone and the upper part of the right bone are segmented correctly.

The second and third row show results for two CT angiography images of vessels.
It is clearly necessary to incorporate the illumination since parts of the images differ widely in their brightness. 
For the first vessel image the parameters proposed in \cite{LHDGMG11} were used.
For the second vessel we set $\sigma=10, \gamma=10$.  
The first segmented vessel by our method shows slightly thicker structures.
In the image center the segmentation method M1) therefore gives better results.
In the lower part M1) leads to too thin structure. Here our method performs better.
M3) leads to the worst results in this example. 
Many thin structures are not corrected and some additional artifacts are visible.
For the second vessel all three methods give good results.
However, M1) and M3) show some small artifacts that are not visible with the proposed method.

\begin{figure}
	\centering
	\includegraphics[width=0.19\linewidth]{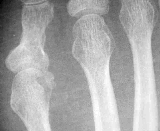}
	\includegraphics[width=0.19\linewidth]{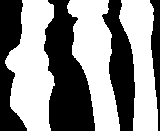}
	\includegraphics[width=0.19\linewidth]{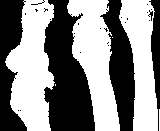}
	\includegraphics[width=0.19\linewidth]{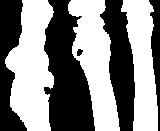}\\[1.5ex]
	\includegraphics[width=0.19\linewidth]{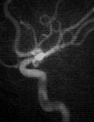}
	\includegraphics[width=0.19\linewidth]{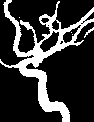}
	\includegraphics[width=0.19\linewidth]{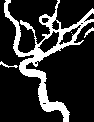}
	\includegraphics[width=0.19\linewidth]{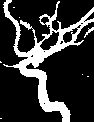}\\[1.5ex]
	\includegraphics[width=0.19\linewidth]{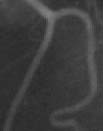}
	\includegraphics[width=0.19\linewidth]{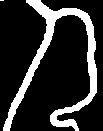}
	\includegraphics[width=0.19\linewidth]{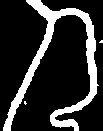}
	\includegraphics[width=0.19\linewidth]{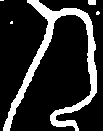}
	\caption{From left to right: original image, segmented image by our method, results by M1) and M3).
		First row: X-ray image of bones  ($\lambda_1=\lambda_2=0.5$, $\gamma=25$, $\sigma=20$), 
		second row: CTA image of a vessel  ($\lambda_1=\lambda_2=0.025$, $\gamma=25$, $\sigma=20$), 
		third row: CTA image of a vessel  ($\lambda_1=\lambda_2=0.5$, $\gamma=50$, $\sigma=10$).}
	\label{fig:medical}
\end{figure}

\paragraph{3D FIB Tomography Images} \label{subsec:segm:FIB}
Next we are interested in the segmentation of 3D images stemming from FIB tomography.
We consider 3D data of an aluminum matrix composite.
More precisely, the material consists of three phases which we want to segment: 
the first phase is aluminum, the second one consists of silicon carbide (SiC) particles and the third one are copper aggradations.
The aluminum particles are darker than the surrounding aluminum matrix.
The copper aggradations are visible as small bright spots.

In Fig.~\ref{fig:real_3d} and \ref{fig:real_3d_b} two 3D data sets of this material with different particle sizes are examined.
The necessity to take the illumination into account when segmenting the image in Fig.~\ref{fig:real_3d} was already discussed in Fig.~\ref{fig:bad_ex}.
In Fig.~\ref{fig:real_3d:a}-\subref{fig:real_3d:b} and Fig.~\ref{fig:real_3d_b:a}-\subref{fig:real_3d_b:b}, respectively, an exemplary slice of the data set is shown together with the slice of the segmented data. 
In addition a visualization of the segmented 3D data set is provided in Fig.~\ref{fig:real_3d:c}-\subref{fig:real_3d:d} and Fig.~\ref{fig:real_3d_b:c}-\subref{fig:real_3d_b:d}, respectively.
Note that due to the very different size of SiC and copper we had to choose a small parameter for the aggradations ($\lambda_3$) and a larger one for the particles ($\lambda_2$) to segment both correctly.
Furthermore we fixed $c_3$, which corresponds to the copper aggradations, to avoid that this small segment vanishes similar as in Fig.~\ref{fig:artificial_noise}.

\begin{figure}
\centering
\begin{subfigure}[t]{0.48\textwidth}\centering
\includegraphics[width=0.8\linewidth]{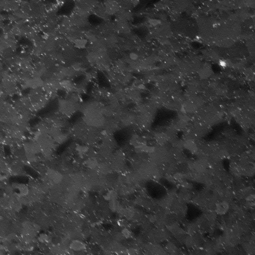}
\caption{Exemplary slice}
\label{fig:real_3d:a}
\end{subfigure}
\begin{subfigure}[t]{0.48\textwidth}\centering
\includegraphics[width=0.8\linewidth]{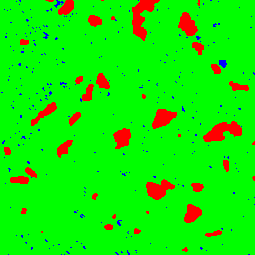}
\caption{Segmented slice}
\label{fig:real_3d:b}
\end{subfigure}
\begin{subfigure}[t]{0.48\textwidth}\centering
\includegraphics[width=0.98\linewidth]{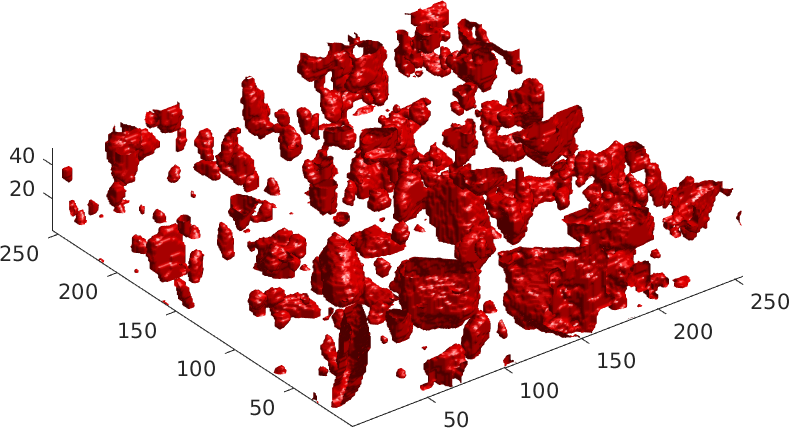}
\caption{3D visualization of the SiC segment}
\label{fig:real_3d:c}
\end{subfigure}
\begin{subfigure}[t]{0.48\textwidth}\centering
\includegraphics[width=0.98\linewidth]{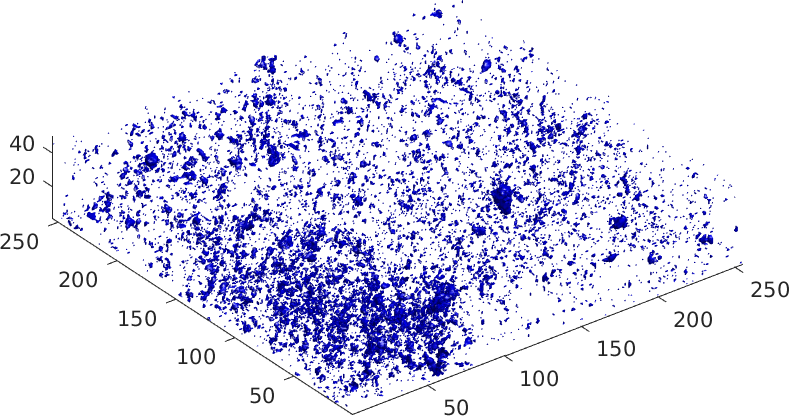}
\caption{3D visualization of the copper segment}
\label{fig:real_3d:d}
\end{subfigure}
\caption{
Segmentation of 3D FIB tomography data with three classes by our method ($\lambda_1=0.1, \lambda_2=0.8, \lambda_3=0.1$), red: SiC particles, blue: copper aggradations. 
}
\label{fig:real_3d}
\end{figure}

\begin{figure}
	\centering
	\begin{subfigure}[t]{0.48\textwidth}\centering
		\includegraphics[width=0.8\linewidth]{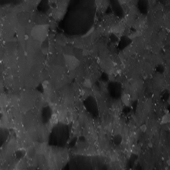}
		\caption{Exemplary slice}
		\label{fig:real_3d_b:a}
	\end{subfigure}
	\begin{subfigure}[t]{0.48\textwidth}\centering
		\includegraphics[width=0.8\linewidth]{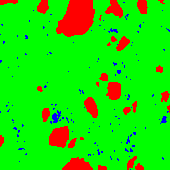}
		\caption{Segmented slice}
		\label{fig:real_3d_b:b}
	\end{subfigure}
	\begin{subfigure}[t]{0.48\textwidth}\centering
		\includegraphics[width=0.98\linewidth]{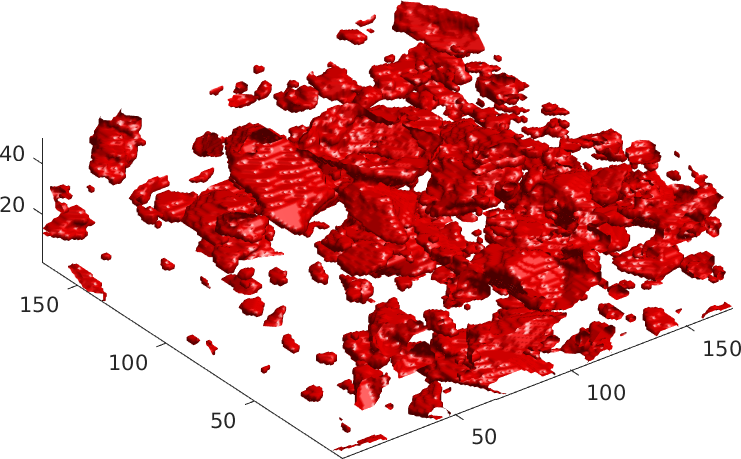}
		\caption{3D visualization of the SiC segment}
		\label{fig:real_3d_b:c}
	\end{subfigure}
	\begin{subfigure}[t]{0.48\textwidth}\centering
		\includegraphics[width=0.98\linewidth]{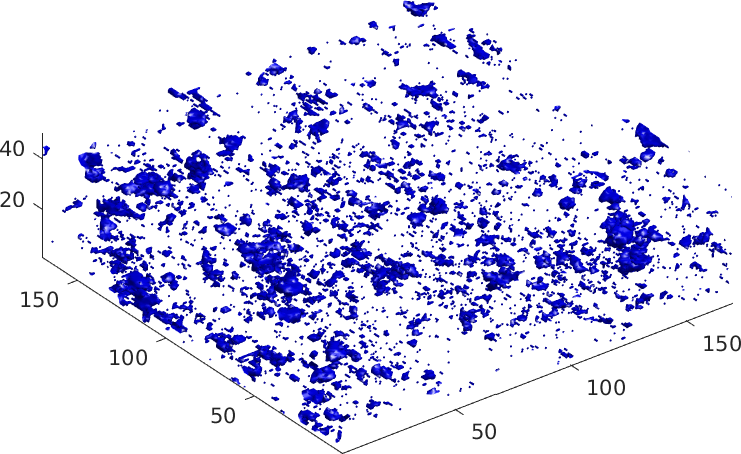}
		\caption{3D visualization of the copper segment}
		\label{fig:real_3d_b:d}
	\end{subfigure}
\caption{Segmentation of 3D FIB tomography data with three classes by our method ($\lambda_1=0.1, \lambda_2=0.6, \lambda_3=0.1$), red: SiC particles, blue: copper aggradations. 
}
\label{fig:real_3d_b}
\end{figure}

\section{Conclusions} \label{sec:conclusions}
We proposed a novel biconvex  model 
for segmentation of images with intensity inhomogeneities. 
that combines a total variation based approach for segmentation with a multiplicative model for the intensity inhomogeneities.
This results in a simultaneous segmentation and intensity correction algorithm without preprocessing.
For the minimization of the resulting functional we used the PALM algorithm for which we could prove convergence to a critical point.
We applied our model very successfully to 3D FIB tomography images and
showed that the proposed model leads to good results compared to a state-of-the-art method for various 2D medical images. 
In future work, we want to address the question how good the relaxed two-class model solves the original binary problem with
$u_k(j) \in \{0,1\}$, see \cite{NEC06}.
The incorporation of the illumination into a segmentation framework  
with additional linear operators as, e.g., blur, see \cite{CCZ13} appears also to be useful.
Finally, we want to handle  other than gray-valued images.
\\[1ex]

{\bf Acknowledgement.}
Funding by the German Research Foundation (DFG) 
with\-in the Research Training Group 1932 "Stochastic Models for Innovations in the Engineering Sciences", project area P3, 
is gratefully acknowledged.

The authors thank K.~Schladitz (Fraunhofer ITWM, Kai\-serslautern) and 
F.~Balle, T.~Beck and S.~Schuff (Department of Mechanical and Process Engineering, 
University of Kaiserslautern)
for fruitful discussions.
We are thankful to T.~Löber (Nano Structuring Center Kaiserslautern) for creating FIB/SEM data of the aluminum matrix composite used in Figure \ref{fig:bad_ex}, \ref{fig:real_3d} and \ref{fig:real_3d_b}. 

\bibliography{refs_infconv}
\bibliographystyle{abbrv}

\end{document}